\newcommand \NN {\mathbb{N}}
\newcommand \ZZ {\mathbb{Z}}
\newcommand \PP {\mathbb{P}}
\newcommand \DP {\overset{def}{=}}
\newtheorem{definition}{Definition}[section]
\newtheorem{theorem}{Theorem}[section]
\newtheorem{lemma}{Lemma}[section]
\newtheorem{remark}{Remark}[section]
\author{%
 Serge Cohen \footnote{ Institut de Math\'ematiques de Toulouse; UMR 5219, Université de Toulouse; CNRS, UT3 F-31062 Toulouse Cedex 9, France. First-Name.Name@math.univ-toulouse.fr}
  \and %
 Xavier Bressaud \footnotemark[1] 
  }
\title{Transition of the simple random walk on the graph of the ice-model} 
\begin{document}
\maketitle

\begin{abstract}
The $6$-vertex model is a seminal model for many domains in Mathematics and Physics. The sets of configurations of the $6$-vertex model can be described as the sets of paths in multigraphs.  In this article the transition probability of the simple random walk on the multigraphs is computed. The unexpected point of the results is the use of continuous fractions to compute the transition probability. 
\end{abstract}

\noindent \textit{Keywords}: Random walk, Markov Chain
\noindent \textit{AMS classification (2000)}: 05C81, 60F05.
\section{Introduction}
 In this article we are interested in  a simple random walk $ Y $  on particular 
 (multi)graphs $ G_K $ indexed by an integer $ K.$ The set of vertices 
 of $ G_K $ is $V_K \DP \lbrace -1,1\rbrace^K $ and the set $ E_K $ of edges 
 is defined so that the set of paths of length $ n $ is isomorphic with 
 the set of configurations of the so-called $6$-vertex model, on a rectangle $K \times n $ in the particular  case of the ice-model~\cite{Lieb67}. Please note that the uniform  distribution is usually considered in the $6$-vertex model, with various admissible boundary conditions on the rectangle when $n, \ K \to \infty$ of a rectangular lattice. In this paper 
we endow the sets of  configurations with the distribution of $ Y_0,\ldots, \; Y_n $ 
which is actually easier to study that the uniform distribution on  the set of paths of length $ n.$ In the $6$-vertex model the height function  $ h $ which is a map 
from the rectangles $K \times n $ to the set of integers $\ZZ$ has physical meaning, and more precisely the decay of the variance of the difference of height  function between two distant points  on the rectangle is of special importance. This question is also related to random graph homomorphisms~\cite{Benjamini07}. See~\cite{Duminil21}
 where the  decay is shown to be of logarithmic order for periodic boundary conditions 
 on the rectangle. For the Markov chain $ Y $ the height fonction is related to an  additive functional $ Z $ of the $ Y,$ which collects a particular vector field defined 
on the edges of $ G_K. $ In a previous paper \cite{Boissard15} it is shown that the variance of the height 
function decays like $ \frac2{(2+K)n}.$ Still for finite $ K $ the variance is computed 
in~\cite{Espinasse16} for periodic boundary condition in the variable between $1$ and $ K,$ and in~\cite{Lammers22} for other constraints.

The easiest way to describe the simple random walk on $G_K $ in the stationary regime is to state that the distribution of the pair $(Y_K(0),Y_K(1)) $ is the uniform  
distribution on the edges $ E_K.$  One aim of this article is to provide a formula 
for the transition probability of the simple random walk that starts from a given vertex. 
 Surprisingly enough the formula in the Theorem~\ref{the:transi} uses the continuous fraction associated to the length of the constancy blocks of the digits of the given 
vertex. The authors see two useful consequences of this result. The first one is the fact that the transition probability of the simple random walk will considerably 
make easier and faster simulations of the walk. See~\cite{Montegut2020} for a survey of previous simulation methods, which are both memories and computationally intensive due the difficulty to describe easily the neighbors of a given vertex in $ G_K $
for large $K$. Another interesting consequence of Theorem~\ref{the:transi} is that it can be extended to the case $ K= \infty.$ This in turns gives a sense to the decay 
of the variance of the height function in $ G_\infty.$ The definitions and the models are given in section~\ref{sec:model}. In section~\ref{sec:results} the main theorems are written and they 
are proved in the following sections.


\section{The model}
\label{sec:model}
Let $\left(Z^{(1)}_t, \ldots, Z^{(K+1)}_t \right)_{ t \in \mathbb N}  \in \mathbb{Z}^{K+1}$ denote the heights of $K+1$ 
simple random walks on $\mathbb{Z}$, conditioned on satisfying

\begin{equation}\label{ContrForme}
\forall t \in \mathbb{N}, \forall i \in \left[ 1, K\right], \left| Z^{(i+1)}_t - Z^{(i)}_t \right| = 1. 
\end{equation}

More precisely, the random walk is a Markov chain on the state space of $K$-step walks in $\mathbb Z$

\begin{equation}
\label{ContrMarchZ}
 \mathcal{S}_K = \{ (z^{(1)}, \ldots, z^{(K+1)}) \in \mathbb{Z}^{K+1}, \quad \forall i \in \left[ 1, K\right], |z^{(i+1)} - z^{(i)}| = 1 \}
\end{equation}

\noindent where the next step from $z_0 \in \mathcal{S}_K$ is selected uniformly among  the $z_1$s that belong to $\mathcal{S}_K$ 
such that  $ \forall i \in [1,K+1], \;  z_{1}^{(i)}- z_{0}^{(i)} \in \{-1,1 \}.$
 In other words, we consider $K+1$ simple
random walks on the lattice $\mathbb{Z}$ coupled under a shape condition.
One can associate to a path of length $n$ $ (Z^{(i)}_t)_{0\le t \le n-1} $ a height function $h$ on a rectangle $ ((t,i))_{0 \le t \le n-1, \; 1 \le i \le K+1} $ by 
$ h(t,i)=Z^{(i)}_t, $ which makes the link with the height function of the $6$-vertex model. 

In~\cite{Boissard15} an equivalent depiction is provided as a simple random walk on a 
(multi)-graph. 
\begin{definition}\label{DefinGraphGK}
Let $V_K\DP \lbrace -1,1\rbrace^K$,  $E_K^+,E_K^-\in V_K\times V_K$ 
will be respectively set of postive, negative edges on $V_K.$

A pair $ (a,b)\in V_K\times V_K$ such that  $a\neq b$ belongs to  $E_K^+$ if non vanishing coordinates of the vector  $(b-a)\in \{ -2,0,2\}^K$ have alternate signs with the first sign \emph{negative}. For every vertex $a\in V_K$ there is an edge from  $a$ to $a$  $E_K^+$ denoted by  $(a,a)^+$. 

In a similar manner there is a pair $ (a,b)\in V_K\times V_K$ such that  $a\neq b$ belongs to  $E_K^-$ if non vanishing coordinates of the vector  $(b-a)\in \{ -2,0,2\}^K$ have alternate signs with the first sign \emph{positive}. For every vertex $a\in V_K$ theres is an edge from  $a$ to $a$  $E_K^-$ denoted by  $(a,a)^-$.

For every  $K\in\NN$ the (multi)graph $G_K\DP (V_K,E_K)$, where  $E_K\DP E_K^+\cup E_K^-$. 
\end{definition}

The study of  $(Z_t^{(1)},...,Z_t^{(K+1)})$ can be split in the study of the first coordinate $Z_t^{(1)}$ and of the increments  $$Y_K(t)\DP \Big(Z_t^{(2)}-Z_t^{(1)},\dots ,Z_t^{(K+1)}-Z_t^{(K)}\Big),$$ which is always an   element of $V_K$ (because of~\eqref{ContrForme}).

Let us remark that the uniform distribution on the set of edges $E_K$ is the same 
as the distribution of $ (Y_K(0),Y_K(1)) $ if the Markov process $Y_K$ is stationary. 
We have another useful characterization of this distribution given by the following Lemma. 

\begin{lemma}\label{DescrSeque}
Let  $\epsilon$ be  a  random variable such that $\PP(\epsilon=-1)=\PP(\epsilon=1)=\frac{1}{2}$), let $(\alpha_k)_{k\geq 1}$ be an  i.i.d. sequence of  Bernoulli random variables with parameter $\frac{1}{3}$ and let $(\beta_k)_{k\geq 1}$ be an i.i.d. sequence of random variables such that $\PP(\beta_k=-1)=\PP(\beta_k=1)=\frac{1}{2}$. The previous random variables are mutually independent. Let us define  $\forall k\geq 2$ :  
\[\gamma_k \DP \epsilon(-1)^{\sum_{i=1}^{k-1}\alpha_i},\] 
with the convention that $\gamma_1=\epsilon$.

Let us also  define  $k\geq 1$ :
\begin{eqnarray}
\label{eq:seq-G_K}
A_k &=& (1-\alpha_k)\beta_k+\alpha_k\gamma_k \\
B_k &=& (1-\alpha_k)\beta_k-\alpha_k\gamma_k. 
\end{eqnarray}

The distribution of the pair $((A_k)_{1\leq k\leq K},(B_k)_{1\leq k\leq K})$ is the  uniform distribution on  $E_K$. If we denote by  $(A,B)_K$ the edge defined by  :
\[(A,B)_K \DP \left\lbrace \begin{array}{ll}
    \left((A_k)_{1\leq k\leq K},(B_k)_{1\leq k\leq K}\right) & \text{ if } (A_k)_{1\leq k\leq K}\neq (B_k)_{1\leq k\leq K} \\
    \left((A_k)_{1\leq k\leq K},(B_k)_{1\leq k\leq K}\right)^\epsilon & \text{ if } (A_k)_{1\leq k\leq K}= (B_k)_{1\leq k\leq K}
\end{array}\right.\]
then $(A,B)_K\overset{\mathcal{L}}{=}(Y_K(0), Y_K(1))$ if $ Y_K(0) \neq Y_K(1) $ and 
$$ (A,A)^{\epsilon}_K\overset{\mathcal{L}}{=}(Y_K(0), Y_K(0))^{Z_1^{(1)}-Z_0^{(1)}} \quad  \text{if} \quad Y_K(0) = Y_K(1).$$   Moreover $(A_k)_{1\leq k\leq K}\overset{\mathcal{L}}{=}Y_K(0)$ and $(B_k)_{1\leq k\leq K}\overset{\mathcal{L}}{=}Y_K(0)$.
\end{lemma}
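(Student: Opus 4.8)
The plan is to compute, for each fixed edge $e \in E_K$, the probability that the construction above produces $e$, and to show it equals $\tfrac12\bigl(\tfrac13\bigr)^{K}$ independently of $e$; since these probabilities sum to $1$, this simultaneously gives $|E_K| = 2\cdot 3^{K}$ and the uniformity of $(A,B)_K$ on $E_K$. Two preliminary observations organise the computation. First, by~\eqref{eq:seq-G_K}, $\alpha_k = 1$ if and only if $A_k \neq B_k$, and on the event $\{\alpha_k = 0\}$ one has $A_k = B_k = \beta_k$. Second, on $\{\alpha_k = 1\}$ one has $A_k = \gamma_k$ and $B_k = -\gamma_k$, while the sign of $\gamma_k$ changes exactly when the partial sum $\sum_{i<k}\alpha_i$ increases, that is, exactly at the successive indices carrying an $\alpha = 1$; hence, writing $k_1 < \dots < k_m$ for those indices, $A_{k_j} = (-1)^{j-1}\epsilon$ and $B_{k_j} = (-1)^{j}\epsilon$ for all $j$. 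In particular the non-vanishing coordinates of $B - A$ alternate in sign starting from $-\epsilon$, so $(A,B)_K$ is always a well-defined element of $E_K$, lying in $E_K^+$ when $\epsilon = 1$ and in $E_K^-$ when $\epsilon = -1$.

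I would then distinguish two cases. If $e = (a,b)$ with $a \neq b$, let $S$ be the set of coordinates on which $a$ and $b$ differ. The event $\{(A,B)_K = e\}$ forces $\{k : \alpha_k = 1\} = S$, forces $\beta_k = a_k$ for $k \notin S$, and, upon comparing $A_{k_1} = \epsilon$ with the value of $a_{k_1}$ prescribed by Definition~\ref{DefinGraphGK} ($a_{k_1} = 1$ when $e \in E_K^+$ and $a_{k_1} = -1$ when $e \in E_K^-$), forces $\epsilon = 1$, respectively $\epsilon = -1$. The crucial point is that, once $\epsilon$ is fixed in this way, the alternation built into $\gamma$ coincides \emph{exactly} with the alternation of signs of the non-vanishing coordinates of $b - a$ demanded by Definition~\ref{DefinGraphGK}, so that $A_{k_j} = (-1)^{j-1}\epsilon = a_{k_j}$ and $B_{k_j} = -a_{k_j} = b_{k_j}$ hold automatically for every $j$ and \emph{no} further constraint on $(\alpha_k)$, $(\beta_k)$ or $\epsilon$ is generated. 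Multiplying the probabilities of these independent constraints gives
\[
\bigl(\tfrac13\bigr)^{|S|}\bigl(\tfrac23\bigr)^{K-|S|}\cdot\tfrac12\cdot\bigl(\tfrac12\bigr)^{K-|S|} = \tfrac12\bigl(\tfrac13\bigr)^{K},
\]
independent of $|S|$ and of $e$. If $e = (a,a)^{\pm}$ is a self-loop, then $\{(A,B)_K = e\}$ forces all $\alpha_k = 0$, then $\beta_k = a_k$ for every $k$, and $\epsilon = \pm 1$, so its probability is $\bigl(\tfrac23\bigr)^{K}\bigl(\tfrac12\bigr)^{K}\cdot\tfrac12 = \tfrac12\bigl(\tfrac13\bigr)^{K}$ as well. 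This establishes that $(A,B)_K$ is uniform on $E_K$.

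It then remains to read off the distributional identities. Uniformity of $(A,B)_K$ on $E_K$, combined with the observation of Section~\ref{sec:model} that $(Y_K(0),Y_K(1))$ is uniform on $E_K$ in the stationary regime --- the self-loop sign recording the common direction $Z_1^{(1)} - Z_0^{(1)}$ of the $K+1$ coupled walks through the dictionary between $G_K$ and the walks established in~\cite{Boissard15} --- yields the first two assertions. For the marginals I would use $-\gamma_k = (-\epsilon)(-1)^{\sum_{i<k}\alpha_i}$ to see that $(B_k)_{1\le k\le K}$ is the \emph{same} deterministic function of $\bigl(-\epsilon,(\alpha_k),(\beta_k)\bigr)$ as $(A_k)_{1\le k\le K}$ is of $\bigl(\epsilon,(\alpha_k),(\beta_k)\bigr)$; since $-\epsilon \overset{\mathcal{L}}{=} \epsilon$, this gives $(B_k)_{1\le k\le K} \overset{\mathcal{L}}{=} (A_k)_{1\le k\le K}$, and $(A_k)_{1\le k\le K}$, being the first marginal of the uniform edge $(A,B)_K$, has the law of $Y_K(0)$. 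The step I expect to be the main obstacle is precisely the verification, in the case $a \neq b$, that the sign pattern forced on $A_{k_1},\dots,A_{k_m}$ (equivalently on $B_{k_1},\dots,B_{k_m}$) by $\gamma_k = \epsilon(-1)^{\sum_{i<k}\alpha_i}$ is consistent with --- and adds nothing to --- the alternation required by Definition~\ref{DefinGraphGK}: this is what produces the cancellation making $\PP\bigl((A,B)_K = e\bigr)$ independent of the number of flips $|S|$, and carrying it out cleanly forces one to match carefully the convention ``first non-vanishing sign negative for $E_K^+$'' with the fact that the sign of a non-zero coordinate of $b-a$ is already determined by $a$.
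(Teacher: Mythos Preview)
Your argument is correct. The paper proves the same lemma by induction on $K$: it first checks $(A,B)_K\in E_K$ via $B_k-A_k=-2\alpha_k\gamma_k$, then computes $|E_K|=2\cdot 3^K$ recursively, and finally shows $\PP((A,B)_{K+1}=e')=\frac{1}{D_{K+1}}$ by conditioning on the last coordinate appended to an edge of $E_K$ and using the induction hypothesis. Your route is a direct, non-inductive computation: you identify, for each fixed edge $e$, exactly which events in $(\epsilon,(\alpha_k),(\beta_k))$ produce it, and multiply. The crucial cancellation $(\tfrac{2}{3})(\tfrac{1}{2})=\tfrac{1}{3}$ is what makes the answer independent of $|S|$, and you have isolated it clearly; you also get $|E_K|=2\cdot3^K$ for free rather than as a separate count. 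Your symmetry argument for the marginal $(B_k)\overset{\mathcal{L}}{=}(A_k)$ via $\epsilon\mapsto-\epsilon$ is a nice touch that the paper does not make explicit. The inductive approach in the paper has the advantage of dovetailing with the Kolmogorov-consistency picture used later to pass to $K=\infty$, but for the finite-$K$ statement your direct computation is arguably cleaner and makes the mechanism (alternation of $\gamma$ matching the alternation in Definition~\ref{DefinGraphGK}) more transparent.
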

\begin{proof}
The proof is by induction and can be found in ~\cite{Montegut2020}. 
Let us first prove that $(A,B)_K \in E_K.$ We remark that $ B_k - A_k = - 2 \alpha_k \gamma_k,$ then it is vanishing if $ \alpha_k = 0,$ and the alternating rule sign is fulfilled because every time $\alpha_k = 1,$ $ \gamma_k $ has a different sign from 
$\gamma_{k-1}.$ Let us denote by $D_K$ the cardinal of $ E_K.$ A simple 
computation yields $D_1=6$ and by induction $D_K = 2 \times 3^K.$ 
It is also obvious to check $ \PP ((A,B)_1 = e) =  \frac16 $ for every edge in $E_1.$
Let us assume that  $ \PP ((A,B)_K= e) =  \frac1{D_K}$ is true for every edge in $E_K$
Let us consider $u', \; v' \in V_K$ and denote by $u'\pm 1$ the vertex in $V_{K+1}$ 
obtained by concatenating $\pm 1 $ on the right of $ u'.$ Then for 
\begin{align*}
 \PP ((A,B)_{K+1}= (u'1,v'1)) &=  \PP ((A,B)_K= (u',v') \cap \alpha_{K+1} = 0 \cap \beta_{K+1}= 1) \\
 &= \frac1{D_K} \frac23 \frac12 \\ 
 & = \frac1{D_{K+1}}. 
\end{align*}
 The same 
holds for $(u'-1,v'-1).$ If the concatenated digit to $u'$ is different from the one concatenated to $v'$ and $ u' \neq v'$ there is only  one possible choice which leads 
to a non vanishing probability depending on the last digit that differs between 
$u'$ and $v'.$ 
Then for this choice 
\begin{align*}
  \PP ((A,B)_{K+1}= (u' \pm 1,v' \mp 1)) &=  \PP ((A,B)_K= (u',v') \cap \alpha_{K+1} = 1 ) \\ &= \frac1{D_K} \frac13 \\
  &= \frac1{D_{K+1}}.
\end{align*} 
The proof is complete when we consider the case $u'=v'$ and in this case we get also 
$ \PP ((A,B)_{K+1}= (u' \pm 1,v' \mp 1))  =  \frac1{D_{K+1}} $ thanks to the distribution of $\epsilon.$ 
\end{proof}
\begin{remark} 
\label{rem=infty}
One important consequence of the previous result is the fact that the graph 
$G_K $ is defined for $ K= \infty.$ Let us be more precise. 
When $ K = \infty, $ $ V_\infty \DP \lbrace -1,1\rbrace^{\NN}, $ and 
$ E_\infty^+, \; E_\infty^- $ are defined with the same alternating rules as for finite 
$K.$  The Lemma~\ref{DescrSeque} is still true when $K$ infinite. 
\end{remark}
\section{Results}
\label{sec:results}
 The aim of this article is to compute the transition probability of the stationary Markov chain associated with the simple random walks on the (multi)graphs
 $ G_K $ for  $ K  \in \NN \cup \infty.$ Hence we will compute the conditional probability that $ (B_k)_{k\le K} $ takes a particular value in $ V_K$ once the sequence $ (A_k)_{k\le K}$ is given. 
 
  Please note that when $ K = \infty, $ the law of large number implies for both sequences $ (A_k)_{k \in \NN }, \; (B_k)_{k \in \NN }$  that there are not constant for  $ k $ big enough almost surely.  
  
Let us assume that the sequence 
$ (a_k)_{k \in \NN } \in \{-1,1 \}^{\NN}  $ starts with $ a_1 = 1.$ 
Let us fix the consecutive times where $ a $ is constant and denote 
by $ i_m = (-1)^{m+1}.$ (If $a_1=-1,$ then $ i_m = (-1)^m$).
By convention we set $ S_0=0,$ and for $ m \ge 1,$  we assume that the $m$-th block of constancy of $ a$ starts with $ S_{m-1}+1 $ and stops with $ S_m.$

Let us denote for $ m \ge 1,$  the event 
\begin{equation}
A^{(m)}= \{ a \in \{-1,1\}^{\NN} \; \mbox{such that} \;  a_{S_{m-1}+1} = i_m, \ldots, \; a_{S_{m}}= i_m \} 
\end{equation}
 of sequences which are equal to $ a$ on the  $m$-th block of constancy of $ a$. 
 
\begin{remark} 
In the following we are conditioning the distribution of $ B $  with respect 
of events of the form  $ \{ A = a\},$ where $ a $ is a deterministic sequence. Once $ a $ is given, so is the sequence $ S $ and the conditioning with respect of $ A^{(m)} $ 
actually means with respect of the event $ \{ A_{S_{m-1}+1} = i_m, \ldots, \; A_{S_{n}}= i_m  \}.$ We will use the abuse of notation $ \PP (.|a), \; \PP (.|A^{(m)}) $ in the sequel. 
\end{remark}

  Hence $ \{ a \} = \{(a_k)_{k \le K} \} = \cap_{m=1}^N A^{(m)}  $ where $ N $ is the number of blocks of constancy of $a.$ By definition of $M_m$ the length of the $m$-th block of constancy of $ a $ is 
 equal to 
 \begin{equation}
 M_m = S_m - S_{m-1}. 
 \end{equation}
 
Let   for $ 1 \le m \le n \le N $ 
\begin{equation}
\label{eq:frac-cont}
x^n_m \DP \frac{1}{M_m + \frac{1}{M_{m+1}+\frac{1}{\ldots+ \frac1{M_n+1}}}} = [M_m, M_{m+1},\ldots, M_n,1],
\end{equation}
if $ n < m $ we set $ x^n_m \DP 1$ by convention.

When $ K = \infty$ the continuous fraction in~\eqref{eq:frac-cont} is converging when $ 
n \to  \infty$  toward an irrational number because of Remark~\ref{rem=infty} that 
will be denoted by $x_m^{\infty}.$ 

Please remark that if $Y_K(0) = a $ and $Y_K(1) = b,$ at most one digit $b_k$ of $ b $ is different of $a_k $ in any block of constancy $ A^{(m)} $ of $ a,  $ because of the alternating sign rule. Let $(\epsilon_m)_{1 \leq m \leq N}$ be   Bernoulli random variables such that $\epsilon_m =1 $ if and only if there is one change of digits between $ a$ and $ b $ in the $m$-th  block of constancy $ A^{(m)}.$ Let  $(E_m)_{1 \leq k\leq N}$  be a sequence of independent random variables uniformly distributed on $\{1, \ldots, M_m\}$ which encode the digit that is changed in $ A^{(m)}.$ One further constraint due to the alternating sign rule is that when $\epsilon_m =1, $ $\epsilon_{m+ 2 k} = 0 $ on the event that $ \epsilon_{m+1}= 0, \ldots, \epsilon_{m+(2k-1)}=0.$  In other words there cannot be change of digits in two consecutive blocks of constancy of $ a $ 
that have an even difference of indexes, since the $a_k $ are the same on those blocks. 
The conditional probability $ \PP(Y_K(1) = b | Y_K(0) = a)$  is then described by the following Theorem that yields the distribution of the $(\epsilon_m)_{1 \leq m \leq N}.$ 
\begin{theorem}
\label{the:transi}
The distribution of $ (\epsilon_m)_{1 \leq m \leq N}$  is given by~:
\begin{itemize}
\item Initializing phase 
\begin{equation}
\label{eq:eps1}
\PP ( \epsilon_1= 1| a  )=\frac{M_1}{M_1+1 +x_2^N} 
\end{equation} 
where $N$ is the number of blocks of constancy of $a.$ 
\item Subsequent phase when previously there is no change
\begin{equation}
\label{eq:sub-1}
 \PP(\epsilon_m= 1|\epsilon_{m-1}= 0, \ldots, \epsilon_1=0,\; a) = 
\frac{M_m}{M_m + 1 + x_{m+1}^{N}}
\end{equation}
where $ N  $ is the number of blocks of constancy of $a.$ 
\item Subsequent phase when previously there is at least one  change $$ \PP(\epsilon_m= 1|\epsilon_{m-1}= 0, \ldots, \epsilon_{m-(2k-1)}=1,\; a)= M_m x_m^N,$$ where $ N  $ is the number of blocks of constancy of $a.$ 
\item Loop in $ G_K.$ If $\epsilon_m = 0 $ for $ 1 \leq m \leq N $ which is  equivalent to $  b= a.$ 
\begin{align}
\PP((Y_K(0),Y_K(1)) &= (a,a)^{+} |  Y_K(0) = a) \notag \\ &= \PP((Y_K(0),Y_K(1)) = (a,a)^{-} |  Y_K(0) = a) \notag \\ 
&= \frac12 \PP (\epsilon_m = 0, \text{ for }  1 \leq m \leq N |a). \label{eq:loop}
\end{align}
\end{itemize}
$ \forall m \ge 1 $ the distribution of the $ E_m$'s is uniform on 
$ \{ 1,\ldots,M_m\} $  conditionally to the event $\epsilon_m = 1.$ 
\end{theorem}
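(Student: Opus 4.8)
The plan is to reduce the whole statement to the explicit coupling of Lemma~\ref{DescrSeque}, to read off what the $\epsilon_m$'s and $E_m$'s mean in terms of the variables $(\epsilon,(\alpha_k),(\beta_k),(\gamma_k))$ there, and then to organise the resulting combinatorics through a three-state transfer recursion whose fixed-point equation \emph{is} the continued fraction of~\eqref{eq:frac-cont}. First I would set up the dictionary. Since $B_k-A_k=-2\alpha_k\gamma_k$, a digit of $B$ differs from a digit of $A$ at position $k$ exactly when $\alpha_k=1$; hence $\epsilon_m=\ind_{\{\exists k\in\text{block }m:\ \alpha_k=1\}}$ and, on $\{\epsilon_m=1\}$, $E_m$ is the within-block location of that $k$. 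Conditioning on $\{A=a\}$ forces, position by position, either ($\alpha_k=0$ and $\beta_k=a_k$) or ($\alpha_k=1$ and $\gamma_k=a_k$). Using $\gamma_k=\epsilon(-1)^{\#\{i<k:\ \alpha_i=1\}}$ I would then establish: (i) on $\{A=a\}$ each block of constancy of $a$ carries at most one flip, since two consecutive flips inside a block would flip the sign of $\gamma$ inside that block while $a$ is constant there; (ii) if the flipped blocks are $m_1<\dots<m_s$ then the constraints $\gamma=a$ at the flip positions read exactly $\epsilon(-1)^{r-1}=i_{m_r}$ for $1\le r\le s$, which are simultaneously solvable iff $m_{r+1}-m_r$ is odd for every $r$ (the alternating constraint already noted before the Theorem), in which case $\epsilon$ is \emph{determined} by the pattern; (iii) if there is no flip at all then $b=a$ and $\epsilon$ stays free. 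None of (i)--(iii) involves the within-block locations, which already isolates the last sentence of the Theorem.

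Next I would compute the weight of a pattern. Directly from Lemma~\ref{DescrSeque}, for an \emph{admissible} flip set $C$ (at most one flip per block; consecutive flipped blocks at odd distance) with prescribed within-block positions one gets $\PP(\{k:\ \alpha_k=1\}=\text{those positions},\ A=a)=(1/3)^K$ if $C=\emptyset$ and $=(1/3)^K\cdot\tfrac12$ if $C\neq\emptyset$: each flipped position costs $\PP(\alpha_k=1)=\tfrac13$, each unflipped position costs $\PP(\alpha_k=0)\PP(\beta_k=a_k)=\tfrac23\cdot\tfrac12=\tfrac13$, and a nonempty $C$ in addition pins down $\epsilon$, costing $\tfrac12$. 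Summing over the $\prod_{m\in C}M_m$ choices of positions, $C$ receives weight $\prod_{m\in C}M_m$. Now let $g_m^{N},g_m^{O},g_m^{E}$ be the total $\prod M_j$-weight of admissible flip sets in blocks $\{m,\dots,N\}$ when, on entering block $m$, respectively no flip has occurred yet, the last flip is at odd distance, the last flip is at even distance. Reading off the allowed moves gives $g_m^{N}=g_{m+1}^{N}+M_m g_{m+1}^{O}$, $g_m^{O}=g_{m+1}^{E}+M_m g_{m+1}^{O}$, $g_m^{E}=g_{m+1}^{O}$, with $g_{N+1}^{N}=g_{N+1}^{O}=g_{N+1}^{E}=1$; hence $g_m^{O}=M_m g_{m+1}^{O}+g_{m+2}^{O}$, so $t_m\DP g_{m+1}^{O}/g_m^{O}$ obeys $t_m=1/(M_m+t_{m+1})$ with $t_N=1/(M_N+1)$, i.e. $g_{m+1}^{O}/g_m^{O}=x_m^{N}$ in the notation of~\eqref{eq:frac-cont}. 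The algebraic core of the argument is the identity
\[
g_m^{N}+1=g_m^{O}+g_{m+1}^{O}\qquad(1\le m\le N),
\]
which I would prove by downward induction on $m$ (base $m=N$, inductive step using the three recursions). This is the step I expect to be the genuine obstacle: it is precisely the cancellation that converts the bulky rational quantity $g_m^{N}$ into the compact $M_m+1+x_{m+1}^{N}$; once it is available the rest is bookkeeping.

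Finally I would assemble the conclusion. Summing the pattern weights, $\PP(A=a)=(1/3)^K\tfrac{g_1^{N}+1}{2}$; more generally $\PP(\epsilon_1=\dots=\epsilon_{m-1}=0,\ A=a)=(1/3)^K\tfrac{g_m^{N}+1}{2}$ (the first $m-1$ blocks are then frozen with cost $(1/3)^{S_{m-1}}$ and the rest is the same problem on blocks $m,\dots,N$), while $\PP(\epsilon_1=\dots=\epsilon_{m-1}=0,\ \epsilon_m=1,\ A=a)=(1/3)^K\tfrac{M_m g_{m+1}^{O}}{2}$. Dividing and using the displayed identity together with $g_m^{O}=M_m g_{m+1}^{O}+g_{m+2}^{O}$ and $g_{m+2}^{O}/g_{m+1}^{O}=x_{m+1}^{N}$ yields
\[
\PP(\epsilon_m=1\mid\epsilon_{m-1}=\dots=\epsilon_1=0,\ a)=\frac{M_m g_{m+1}^{O}}{g_m^{O}+g_{m+1}^{O}}=\frac{M_m}{M_m+1+x_{m+1}^{N}},
\]
which is~\eqref{eq:eps1} for $m=1$ and~\eqref{eq:sub-1} in general. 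For the phase following a flip at odd distance $2k-1$, the blocks strictly before the last flipped block, that block itself, and the intervening $\epsilon=0$ blocks all contribute identical frozen factors to numerator and denominator and cancel; on entering block $m$ one is then in state $O$, so the conditional probability equals $M_m g_{m+1}^{O}/g_m^{O}=M_m/(M_m+x_{m+1}^{N})=M_m x_m^{N}$, using $x_m^{N}=1/(M_m+x_{m+1}^{N})$. When no flip occurs, $\{b=a\}=\{\epsilon_m=0\text{ for all }m\}$, and conditionally on this event and on $\{A=a\}$ the variable $\epsilon$ is independent of $(\alpha,\beta)$ and uniform on $\{-1,1\}$; since by Lemma~\ref{DescrSeque} the loop edge is $(a,a)^{\epsilon}$, the mass splits equally between $(a,a)^{+}$ and $(a,a)^{-}$, which is~\eqref{eq:loop}. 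The uniformity (and mutual conditional independence) of the $E_m$'s on $\{\epsilon_m=1\}$ is immediate because in every count above a flip's within-block position entered only through the symmetric factor $M_m$. The case $K=\infty$ follows by letting $K\to\infty$: the finite-$K$ conditional laws of any finite initial segment of $(\epsilon_m)$ form a consistent family and $x_m^{N}\to x_m^{\infty}$ by the convergence of the continued fraction noted after~\eqref{eq:frac-cont}.
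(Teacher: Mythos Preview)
Your argument is correct and proceeds along a genuinely different route from the paper's. The paper never counts admissible flip patterns directly; instead it introduces the auxiliary probabilities $u_m^n=\PP(\gamma_{S_{m-1}+1}=i_m,\ A^{(m)},\ldots,A^{(n)})$ and $v_m^n=\PP(\gamma_{S_{m-1}+1}=-i_m,\ A^{(m)},\ldots,A^{(n)})$, proves a conditional-independence lemma for the filtration generated by $(A,B)$ given $\gamma$, and derives the $2\times2$ transfer recursion $\bigl(\begin{smallmatrix}u_m^n\\v_m^n\end{smallmatrix}\bigr)=3^{-M_m}\bigl(\begin{smallmatrix}M_m&1\\1&0\end{smallmatrix}\bigr)\bigl(\begin{smallmatrix}u_{m+1}^n\\v_{m+1}^n\end{smallmatrix}\bigr)$, from which $x_m^n=v_m^n/u_m^n$ and each conditional probability is then computed by separate conditional-independence manipulations. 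Your three-state automaton $(N,O,E)$ is essentially the same object in disguise---your states $O$ and $E$ correspond exactly to $\gamma_{S_{m-1}+1}=i_m$ and $\gamma_{S_{m-1}+1}=-i_m$, while your state $N$ is the mixture over the free initial sign $\epsilon$---but your organisation is more combinatorial: you strip off the common factor $3^{-K}$ once, reduce everything to integer weights $\prod_{m\in C}M_m$, and then the single identity $g_m^{N}+1=g_m^{O}+g_{m+1}^{O}$ (which has no explicit analogue in the paper) does in one stroke what the paper obtains by two separate computations of numerator and denominator in the ``no previous change'' case. Your approach is arguably more elementary (no $\sigma$-algebra conditional-independence lemma is needed, only independence of the $(\alpha_k,\beta_k)$), and it makes the boundary cases $m=N$ and the convention $x_{N+1}^N=1$ fall out automatically from $g_{N+1}^{\ast}=1$; the paper's approach, on the other hand, keeps the probabilistic structure visible throughout and isolates the conditional-independence mechanism as a reusable lemma.
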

\begin{remark}
Remember that the distribution $ \PP (.|a) $ is the uniform distribution on 
the neighbors of $ a $ in $E_K,$ this fact is not obvious from the previous theorem. 
Indeed, if   a given $ a $ in $E_K $ has $ N $ blocks of constancy,  it leads for instance to 
\begin{align}
\label{eq:equi}
\frac1{deg_K(a)} &= \frac{\PP( (\epsilon_1,\ldots,\epsilon_N)=(1,\dots,1)| a)}{\prod_{k=1}^N M_k} \\
&=  \frac1{M_1+1+ x_2^N}\prod_{k=2}^{N} x_k^N. \notag
\end{align}
Let us now suppose  that for a given $ 2 \le k_0  \le k_0+1 < N,$ $ \epsilon_{k_0}=0,$ it implies that $ \epsilon_{k_0+1}=0,$  and the other $ \epsilon_k =1.$ The equation 
\begin{equation}
\label{eq:equi-2}
\frac1{deg_K(a)}= \frac{\PP( (\epsilon_1,\ldots,\epsilon_N)=(1,\dots,1,0,0,1,\dots,1)| a)}{\prod_{k=1}^N M_k}
\end{equation}
is still true when the $0,0$ are for this $k_0.$ In equation~(\ref{eq:equi}) we only have to change the factors for $k_0$ and $ k_{0}+1, $ so  $ x_{k_0}^N $  becomes $ x_{k_0+1}^N x_{k_0}^N $ and the factor  $ x_{k_0+1}^N $ becomes $1.$ Then we check
$$ \frac{\PP( (\epsilon_1,\ldots,\epsilon_N)=(1,\dots,0,0,\dots,1)| a)}{\prod_{k=1}^N M_k} = \frac1{deg_K(a)}.$$
  Tedious computations can show that actually the probability to jump from $ a $ to each of his neighbor is the same.
\end{remark}

When $ K = \infty,$ the previous Theorem still holds true 
when we consider that  the number of constancy blocks of $a$ is infinite and use 
the definition of the continuous fraction as a limit . 
\begin{theorem}
\label{the:transi-infty}
The distribution of $ (\epsilon_m)_{1 \leq m}$  is given by~:
\begin{itemize}
\item Initializing phase 
\begin{equation}
\label{eq:eps1-infty}
\PP ( \epsilon_1= 1| a  )=\frac{M_1}{M_1+1 +x_2^\infty}.
\end{equation} 
\item Subsequent phase when previously there is no change
\begin{equation}
\label{eq:sub-1-infty}
 \PP(\epsilon_m= 1|\epsilon_{m-1}= 0, \ldots, \epsilon_1=0,\; a) = 
\frac{M_m}{M_m + 1 + x_{m+1}^{\infty}}.
\end{equation}
\item Subsequent phase when previously there is at least one  change $$ \PP(\epsilon_m= 1|\epsilon_{m-1}= 0, \ldots, \epsilon_{m-(2k-1)}=1,\; a)= M_m x_m^\infty.$$ 
\end{itemize}
$ \forall m \ge 1 $ the distribution of the $ E_m$'s is uniform on 
$ \{ 1,\ldots,M_m\} $  conditionally to the event $\epsilon_m = 1.$ 
\end{theorem}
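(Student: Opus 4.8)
The plan is to obtain Theorem~\ref{the:transi-infty} as a limit, over the number of coordinates, of Theorem~\ref{the:transi}. Fix a deterministic $a\in\{-1,1\}^{\NN}$ that is not eventually constant; by Remark~\ref{rem=infty} this is the a.s.\ behaviour of $A$ in $G_\infty$, so it suffices to prove the formulas for such $a$. Let $M_1,M_2,\dots$ be the lengths of its blocks of constancy, $S_n=M_1+\cdots+M_n$, and $a|_{S_n}=(a_1,\dots,a_{S_n})\in V_{S_n}$, which has exactly $n$ blocks, of lengths $M_1,\dots,M_n$. The construction of Lemma~\ref{DescrSeque} is compatible with truncation: the variables $\epsilon,(\alpha_k),(\beta_k),(\gamma_k)$ that build $(A_k,B_k)$ for $k\le K$ do not depend on $K$, so the law of $(A,B)$ restricted to its first $S_n$ coordinates is the same in $G_\infty$ as in $G_{S_n}$. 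Hence, for any event $\Gamma$ depending only on coordinates of index $\le S_m$ with $m\le n$ --- for instance $\{\epsilon_m=1\}$, $\{\epsilon_1=0,\dots,\epsilon_{m-1}=0\}$, $\{\epsilon_{m-(2k-1)}=1,\epsilon_{m-(2k-2)}=0,\dots,\epsilon_{m-1}=0\}$, or $\{E_m=j\}$ --- the conditional probability $\PP(\Gamma\mid A|_{S_n}=a|_{S_n})$ equals the corresponding quantity in $G_{S_n}$, which (via the chain rule) is determined by Theorem~\ref{the:transi} applied with $K=S_n$ and block lengths $M_1,\dots,M_n$.

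I would then pass to the limit $n\to\infty$ by martingale convergence. With $\mathcal F_K=\sigma(A_1,\dots,A_K)$ and $\mathcal F_\infty=\sigma(A_k:k\ge 1)$, a conditional law $\PP(\cdot\mid a)$ in $G_\infty$ is by definition a version of the regular conditional distribution given $\mathcal F_\infty$. For $\Gamma$ as above, $n\mapsto\PP(\Gamma\mid\mathcal F_{S_n})$ is a bounded martingale, so by L\'evy's upward theorem it converges a.s.\ and in $L^1$ to $\PP(\Gamma\mid\mathcal F_\infty)$; evaluated at $a$ this yields, for a.e.\ $a$,
\[
\PP(\Gamma\mid a)=\lim_{n\to\infty}\PP(\Gamma\mid A|_{S_n}=a|_{S_n}).
\]
Applying this to $\Gamma=\{\epsilon_m=1\}$ intersected with each conditioning event of the theorem, and dividing by the limit coming from the conditioning event alone (which is strictly positive --- for the first two phases it is the finite product $\prod_{j<m}\tfrac{1+x^{\infty}_{j+1}}{M_j+1+x^{\infty}_{j+1}}$), one obtains the three conditional probabilities of Theorem~\ref{the:transi-infty} as limits of the corresponding expressions of Theorem~\ref{the:transi}; the conditional uniformity of $E_m$ on $\{1,\dots,M_m\}$ given $\{\epsilon_m=1\}$ passes to the limit at once, being independent of $n$.

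It remains to identify the limits. For $K=S_n$ the block lengths of $a|_{S_n}$ are $M_1,\dots,M_n$, so the quantity ``$x_m^N$'' appearing in Theorem~\ref{the:transi} is exactly $x_m^n=[M_m,\dots,M_n,1]$ of~\eqref{eq:frac-cont}, which converges to the irrational number $x_m^\infty$ recalled before the statement because $a$ is not eventually constant; hence $\tfrac{M_1}{M_1+1+x_2^n}\to\tfrac{M_1}{M_1+1+x_2^\infty}$, $\tfrac{M_m}{M_m+1+x_{m+1}^n}\to\tfrac{M_m}{M_m+1+x_{m+1}^\infty}$ and $M_mx_m^n\to M_mx_m^\infty$, which are~\eqref{eq:eps1-infty},~\eqref{eq:sub-1-infty} and the third formula. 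The loop term~\eqref{eq:loop} of Theorem~\ref{the:transi} has no analogue here: since $\{B_k\neq A_k\}=\{\alpha_k=1\}$ and the $\alpha_k$ are i.i.d.\ Bernoulli$(\tfrac13)$, Borel--Cantelli gives $B\neq A$ $\PP$-a.s.\ in $G_\infty$, so $\{\epsilon_m=0\text{ for all }m\}$ is $\PP$-null and absent from $\PP(\cdot\mid a)$ for a.e.\ $a$. The only genuinely delicate point is this interchange of limit and conditioning --- one must ensure the infinite conditional law really is the pointwise (in $a$) limit of the finite ones, which is exactly why L\'evy's theorem for an increasing filtration is invoked rather than a naive manipulation of conditional probabilities given events of vanishing probability; beyond that, the sole ingredient is the convergence of~\eqref{eq:frac-cont} to an irrational limit, which in particular keeps the denominators $M_m+1+x^\infty_{m+1}$ bounded away from $0$ so that all the limiting ratios are well defined.
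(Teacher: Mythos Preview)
Your argument is correct and follows the same strategy as the paper: compute the finite-$N$ conditional probabilities $\PP(\,\cdot\mid\cap_{m=1}^{N}A^{(m)})$ and let $N\to\infty$, using the convergence $x_m^{N}\to x_m^{\infty}$ of the continued fraction~\eqref{eq:frac-cont}. The paper proves the finite and infinite cases simultaneously in Sections~4--6, simply writing ``let $N\to\infty$'' at each step; you instead take Theorem~\ref{the:transi} as a black box and invoke L\'evy's upward theorem to justify that the limit of $\PP(\,\cdot\mid\mathcal F_{S_n})$ is a version of $\PP(\,\cdot\mid a)$. This is a cleaner packaging of the same idea rather than a different proof, and your added care about the interchange of limit and conditioning fills in precisely the point the paper leaves implicit.
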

\begin{remark}
Please note that the probability of a loop in $G_\infty $ is vanishing. Hence 
there is no loop case in the last Theorem. 
\end{remark}

\section{Proof of the result}
 Let us now introduce the conditional independence with respect of $ \gamma_k,$ which is 
an important tool for our computations.
Let us denote by $ \sigma_l^m \DP \sigma( a_k,\; b_k, \; l \le k \le m ),$ 
for $2 \le l \le m.$ By convention $ \sigma_1^m \DP  \sigma( a_k,\; b_k, \; 1 \le k \le m, \epsilon). $ 
\begin{lemma}
\label{lem:cond-ind}
For every $  1 \le l  \le m \le n  $ $  \sigma_l^m $ is  independent of $ \sigma_{m+1}^n$ conditionally to $ \gamma_{m+1}.$ 
\end{lemma}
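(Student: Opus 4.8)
The plan is to recall the explicit description of the pair $(A,B)_K$ from Lemma~\ref{DescrSeque} and to read off from it exactly which of the underlying i.i.d.\ building blocks $(\epsilon,(\alpha_k),(\beta_k))$ are needed to generate $\sigma_l^m$ versus $\sigma_{m+1}^n$. Recall that for each $k$ the pair $(A_k,B_k)$ is a deterministic function of $\alpha_k$, $\beta_k$ and $\gamma_k$, and that $\gamma_k = \epsilon(-1)^{\sum_{i=1}^{k-1}\alpha_i}$. Moreover the $\sigma$-algebra $\sigma_1^m$ also carries $\epsilon=\gamma_1$. The key structural observation is that the sequence $(\gamma_k)_{k\ge 1}$ is itself a Markov chain (indeed $\gamma_{k+1}=\gamma_k(-1)^{\alpha_k}$ with $\alpha_k$ independent of everything up to index $k$), so conditionally on $\gamma_{m+1}$ the "past" randomness $\{\epsilon,\alpha_1,\dots,\alpha_m,\beta_1,\dots,\beta_m\}$ and the "future" randomness $\{\alpha_{m+1},\alpha_{m+2},\dots,\beta_{m+1},\beta_{m+2},\dots\}$ are independent.

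First I would make the generation map precise: let $\mathcal F_{\le m}\DP\sigma(\epsilon,\alpha_1,\dots,\alpha_m,\beta_1,\dots,\beta_m)$ and $\mathcal F_{>m}\DP\sigma(\alpha_{m+1},\alpha_{m+2},\dots,\beta_{m+1},\beta_{m+2},\dots)$. By~\eqref{eq:seq-G_K} and the formula for $\gamma_k$, every $(A_k,B_k)$ with $1\le k\le m$ is $\mathcal F_{\le m}$-measurable (since $\gamma_k$ for $k\le m$ depends only on $\epsilon,\alpha_1,\dots,\alpha_{m-1}$), hence $\sigma_1^m\subseteq\mathcal F_{\le m}$, and similarly $\sigma_{m+1}^n\subseteq\sigma(\gamma_{m+1},\mathcal F_{>m})$ because for $k\ge m+1$ one has $\gamma_k=\gamma_{m+1}(-1)^{\sum_{i=m+1}^{k-1}\alpha_i}$, so $(A_k,B_k)$ is a function of $\gamma_{m+1}$ and of $\alpha_{m+1},\dots,\alpha_{k-1},\alpha_k,\beta_k\in\mathcal F_{>m}\vee\sigma(\gamma_{m+1})$.

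Next I would establish the conditional independence $\mathcal F_{\le m}\perp\mathcal F_{>m}\mid\gamma_{m+1}$. Here $\mathcal F_{>m}$ is independent of $\mathcal F_{\le m}\vee\sigma(\epsilon)$ outright (the $\alpha_k,\beta_k$ for $k>m$ are independent of everything with index $\le m$ and of $\epsilon$), so it is independent of $\sigma(\gamma_{m+1})\vee\mathcal F_{\le m}$ as well; since $\gamma_{m+1}$ is $\mathcal F_{\le m}$-measurable, the standard criterion "if $\mathcal G$ is independent of $\mathcal H$ and $\mathcal C\subseteq\mathcal H$ then $\mathcal G$ and $\mathcal H$ are conditionally independent given $\mathcal C$" — applied with $\mathcal G=\mathcal F_{>m}$, $\mathcal H=\mathcal F_{\le m}$, $\mathcal C=\sigma(\gamma_{m+1})$ — gives $\mathcal F_{>m}\perp\mathcal F_{\le m}\mid\gamma_{m+1}$. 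Then intersecting with the larger conditioning $\sigma(\gamma_{m+1})$ on the future side, I conclude $\sigma_l^m\perp\sigma_{m+1}^n\mid\gamma_{m+1}$ by the monotonicity of conditional independence under passing to sub-$\sigma$-algebras ($\sigma_l^m\subseteq\mathcal F_{\le m}$ and $\sigma_{m+1}^n\subseteq\mathcal F_{>m}\vee\sigma(\gamma_{m+1})$), noting that adjoining the conditioning variable $\gamma_{m+1}$ to one side does not destroy conditional independence given $\gamma_{m+1}$.

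The main obstacle is bookkeeping rather than conceptual: one must be careful that $\gamma_k$ for $k\le m$ depends on $\alpha_i$ with $i\le m-1$ (so genuinely lies in $\mathcal F_{\le m}$), that $\gamma_{m+1}$ depends on $\alpha_1,\dots,\alpha_m$ and hence is $\mathcal F_{\le m}$-measurable and \emph{not} $\mathcal F_{>m}$-measurable, and that the future blocks $(A_k,B_k)$, $k\ge m+1$, are recovered from $\gamma_{m+1}$ together with future $\alpha$'s and $\beta$'s only. The edge cases $l=1$ (where $\sigma_1^m$ additionally contains $\epsilon$, already handled since $\epsilon\in\mathcal F_{\le m}$) and $l=m$ (a single block) need no separate argument. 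The $K=\infty$ version is identical, the only change being that $\mathcal F_{>m}$ involves infinitely many indices.
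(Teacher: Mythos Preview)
Your argument is correct and is exactly the kind of unpacking the paper has in mind: the paper's own proof consists of the single sentence ``The proof of this Lemma comes from the definitions of Lemma~\ref{DescrSeque},'' and your proposal makes explicit the underlying Markov structure of $(\gamma_k)$ together with the fact that $(A_k,B_k)$ is a function of $(\alpha_k,\beta_k,\gamma_k)$. Nothing further is needed.
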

The proof of this Lemma comes from the definitions of Lemma~\ref{DescrSeque}.


For every $  m\le n,$ let us define $$ u_m^n \DP \PP ( \gamma_{S_{m-1}+1}= i_m \cap A^{(m)} \cap  \ldots \cap A^{(n)}) $$ and  $$ v_m^n \DP \PP ( \gamma_{S_{m-1}+1}= -i_k \cap A^{(m)} \cap  \ldots \cap A^{(n)}).$$ 
With the help of Lemma~\ref{lem:cond-ind}
we can compute $ u_m^n, v_m^n, $ by induction starting from  $ u_n^n, v_n^n, $
and we get the following result. 
\begin{lemma}
\label{lem:rec-matrix}
For every $   m <  n,$
$$ \begin{pmatrix} u_m^n \\ v_m^n \end{pmatrix}  =  \frac1{3^{M_m}} \begin{pmatrix}

 M_m & 1 \\ 1 & 0 \end{pmatrix}  \begin{pmatrix}u_{m+1}^n \\ v_{m+1}^n \end{pmatrix} $$
\end{lemma}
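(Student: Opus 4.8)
\textbf{Proof proposal for Lemma~\ref{lem:rec-matrix}.}

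The plan is to unfold the $m$-th block of constancy of $a$ step by step, conditioning at each step on the value of the relevant $\gamma$, and to use the conditional independence of Lemma~\ref{lem:cond-ind} to detach the contribution of the block $[S_{m-1}+1, S_m]$ from the tail $A^{(m+1)} \cap \cdots \cap A^{(n)}$. Recall that inside the $m$-th block we have $a_k = i_m$ for $S_{m-1}+1 \le k \le S_m$, and the event $A^{(m)}$ forces $A_k = i_m$ on this range. First I would write, for a single index $k$ in the interior of the block, the recursion relating the ``state'' at index $k$ (namely the pair of probabilities of $\gamma_k = i_m$ resp. $\gamma_k = -i_m$, intersected with the remaining block-and-tail events) to the state at $k+1$. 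Since $\gamma_{k+1} = \gamma_k (-1)^{\alpha_k}$, the sign of $\gamma$ flips exactly when $\alpha_k = 1$; and $A_k = i_m$ holds iff either $\alpha_k = 0$ and $\beta_k = i_m$ (probability $\frac23 \cdot \frac12 = \frac13$, $\gamma$ unchanged), or $\alpha_k = 1$ and $\gamma_k = i_m$, i.e. no sign flip is forced but $\gamma_{k+1} = -i_m$ (probability $\frac13$). This gives a $2\times 2$ transfer matrix $\frac13\begin{pmatrix} 1 & 1 \\ 1 & 0\end{pmatrix}$ for each interior step of the block, where the off-diagonal $1$ in the bottom row records that $\gamma_k = -i_m$ together with $A_k = i_m$ forces $\alpha_k = 0$, $\beta_k = i_m$, hence $\gamma_{k+1} = -i_m$ again, while $\gamma_k = -i_m$ with $\alpha_k=1$ would give $A_k = -i_m \neq i_m$.

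Next I would account for the last index of the block, $k = S_m$, which is also the boundary with the next block. At this index the event $A^{(m)}$ still imposes $A_{S_m} = i_m$, but additionally we must track $\gamma_{S_m + 1} = \gamma_{S_{m+1-1}+1}$, which is the conditioning variable appearing in $u_{m+1}^n, v_{m+1}^n$. Here Lemma~\ref{lem:cond-ind} is what licenses the factorization: conditionally on $\gamma_{S_m+1}$, the block variables $\sigma_{S_{m-1}+1}^{S_m}$ are independent of the tail $\sigma_{S_m+1}^{S_n}$, so $\PP(\gamma_{S_m+1} = \pm i_{m+1} \cap A^{(m)} \cap \cdots \cap A^{(n)})$ splits as (block contribution to reaching that value of $\gamma_{S_m+1}$) times (tail contribution $u_{m+1}^n$ or $v_{m+1}^n$). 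Composing the transfer matrices: one gets $\left(\frac13\begin{pmatrix}1&1\\1&0\end{pmatrix}\right)^{M_m - 1}$ for the $M_m - 1$ interior steps, then a final boundary matrix for the step $S_m \to S_m+1$. I would compute $\begin{pmatrix}1&1\\1&0\end{pmatrix}^{M_m-1}$ explicitly (its entries are Fibonacci numbers), and check that after multiplying by the boundary matrix and collecting the $\frac13$ factors one obtains exactly $\frac{1}{3^{M_m}}\begin{pmatrix} M_m & 1 \\ 1 & 0\end{pmatrix}$ — the $M_m$ in the top-left coming from the $M_m$ choices of which index in the block carries the sign-flip, i.e. which index has $\alpha = 1$. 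It is at this bookkeeping step that care is needed: one must be consistent about whether the flip at the block boundary is absorbed into the block or into the tail, and about the indexing convention $\gamma_1 = \epsilon$ versus $\gamma_{S_{m-1}+1}$; getting the multiplicities and the power of $3$ to land precisely on $\frac{1}{3^{M_m}}(M_m, 1; 1, 0)$ rather than a Fibonacci-looking matrix is the main obstacle.

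Finally, I would verify the base of the induction, namely the expressions for $u_n^n$ and $v_n^n$ (the last block, where the tail events are empty), to confirm that the recursion is being iterated from the correct initial data; this is a direct computation from the definitions in Lemma~\ref{DescrSeque} of the same type as the interior-step analysis, applied to the block $[S_{n-1}+1, S_n]$ alone. Putting the per-step transfer matrices together with the conditional independence then yields the claimed one-step recursion $\begin{pmatrix} u_m^n \\ v_m^n\end{pmatrix} = \frac{1}{3^{M_m}}\begin{pmatrix} M_m & 1 \\ 1 & 0\end{pmatrix}\begin{pmatrix} u_{m+1}^n \\ v_{m+1}^n\end{pmatrix}$ for every $m < n$.
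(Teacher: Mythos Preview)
Your overall strategy---condition on the value of $\gamma$ at each index, invoke Lemma~\ref{lem:cond-ind} to factorize, and propagate a two-component state vector across the block---is sound and is essentially a finer-grained version of what the paper does. But there is a concrete error in your per-step transfer matrix. Your own verbal analysis is correct: from $\gamma_k = i_m$ one can reach $\gamma_{k+1} = i_m$ (via $\alpha_k=0$, $\beta_k=i_m$, weight $\tfrac13$) or $\gamma_{k+1}=-i_m$ (via $\alpha_k=1$, weight $\tfrac13$); from $\gamma_k = -i_m$ one can only stay at $\gamma_{k+1}=-i_m$ (via $\alpha_k=0$, $\beta_k=i_m$, weight $\tfrac13$), since $\alpha_k=1$ would force $A_k=-i_m$. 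In the convention $\begin{pmatrix}r_k^+\\r_k^-\end{pmatrix}=T\begin{pmatrix}r_{k+1}^+\\r_{k+1}^-\end{pmatrix}$ this gives the \emph{upper triangular} matrix
\[
T=\frac{1}{3}\begin{pmatrix}1&1\\0&1\end{pmatrix},\qquad T^{M_m}=\frac{1}{3^{M_m}}\begin{pmatrix}1&M_m\\0&1\end{pmatrix},
\]
not the Fibonacci matrix $\tfrac13\begin{pmatrix}1&1\\1&0\end{pmatrix}$ that you wrote. (Your description ``off-diagonal $1$ in the bottom row records $\gamma_k=-i_m\Rightarrow\gamma_{k+1}=-i_m$'' already contradicts your displayed matrix: that transition is a diagonal entry.) This is exactly why you ran into the ``Fibonacci-looking'' obstacle---it is an artifact of the wrong matrix. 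With the correct $T$, the only remaining boundary step is the relabeling $i_{m+1}=-i_m$, which swaps the two components of the tail vector: $r_{S_m+1}^+=v_{m+1}^n$, $r_{S_m+1}^-=u_{m+1}^n$. Hence
\[
\begin{pmatrix}u_m^n\\v_m^n\end{pmatrix}=\frac{1}{3^{M_m}}\begin{pmatrix}1&M_m\\0&1\end{pmatrix}\begin{pmatrix}v_{m+1}^n\\u_{m+1}^n\end{pmatrix}=\frac{1}{3^{M_m}}\begin{pmatrix}M_m&1\\1&0\end{pmatrix}\begin{pmatrix}u_{m+1}^n\\v_{m+1}^n\end{pmatrix},
\]
and no separate ``interior versus boundary'' bookkeeping is needed.

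For comparison, the paper's proof skips the step-by-step iteration entirely: it conditions once on $\gamma_{S_m+1}$, applies Lemma~\ref{lem:cond-ind}, and computes the four block probabilities $\PP(\gamma_{S_{m-1}+1}=\pm i_m,\,A^{(m)},\,\gamma_{S_m+1}=\pm i_m)$ directly by observing that on $A^{(m)}$ at most one $\alpha_l$ can equal $1$ (and none if $\gamma$ starts at $-i_m$). That gives the entries $M_m/3^{M_m}$, $1/3^{M_m}$, $1/3^{M_m}$, $0$ in one stroke. Your route reaches the same place once the matrix is corrected, but the paper's block-at-once computation is shorter.
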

\begin{proof}
By conditioning and Lemma~\ref{lem:cond-ind} we may write
\begin{multline*}
 u_m^n  =\PP (\gamma_{S_{m-1}+1}= i_m, \; A^{(m)} | \gamma_{S_{m}+1}= i_m )  \\ \times \PP (\gamma_{S_{m}+1}= - i_{m+1} , \; A^{(m+1)} \cap  \ldots \cap A^{(n)} ) \\
  +  \PP (\gamma_{S_{m-1}+1}= i_m , \; A^{(m)} | \gamma_{S_{m}+1}= - i_m ) \\ \times  \PP (\gamma_{S_{m}+1}=  i_{m+1} , \; A^{(m+1)} \cap  \ldots \cap A^{(n)} ).
\end{multline*}
This equation yields 
\begin{align}
\label{eq:u_m^n}
 u_m^n  &= \PP (\gamma_{S_{m-1}+1}= i_m \cap A^{(m)} | \gamma_{S_{m}+1}= i_m )  v_{m+1}^n \\ &+  \PP (\gamma_{S_{m-1}+1}= i_m \cap A^{(m)} | \gamma_{S_{m}+1}= - i_m )   u_{m+1}^n.  \notag
\end{align}
Observe that 
\begin{multline}
  \{ \gamma_{S_{m-1}+1}= i_m \cap A^{(m)} \cap \gamma_{S_{m}+1}= i_m \} = \\
\{ \alpha_l = 0,\; \beta_l= i_m \; \mbox{for} \; S_{m-1}+1 \le  l \le S_m \} \cap  \{ \gamma_{S_{m-1}+1}= i_m \}.
\end{multline}
The two events on the right hand side are independent and the probability of the first one 
is $ \frac1{3^{M_m}}.$ Therefore
\begin{multline*}
\label{eq:prob-cond1}
\PP (\gamma_{S_{m-1}+1}= i_m \cap A^{(m)} | \gamma_{S_{m}+1}= i_m )   = \\
\frac{ \PP ( \gamma_{S_{m-1}+1}= i_m \cap A^{(m)} \cap \gamma_{S_{m}+1}= i_m)}
{\PP (\gamma_{S_{m}+1}= i_m  ) }.
\end{multline*}
Hence
\begin{equation}
\label{eq:prob-cond1}
\PP (\gamma_{S_{m-1}+1}= i_m \cap A^{(m)} | \gamma_{S_{m}+1}= i_m )   = \frac1{3^{M_m}}.
\end{equation}

With a similar argument we get 
\begin{equation}
\label{eq:prob-cond2}
 \PP (\gamma_{S_{m-1}+1}= i_m \cap A^{(m)} \cap \gamma_{S_{m}+1}= -i_m) = \frac{M_m}{3^{M_m}}.
\end{equation}
Moreover 
\begin{multline*}
 v_m^n  =\PP (\gamma_{S_{m-1}+1}=- i_m \cap A^{(m)} | \gamma_{S_{m}+1}= - i_m ) \\ \times \PP (\gamma_{S_{m}+1}=  i_{m+1} \cap A^{(m+1)} \cap  \ldots \cap A^{(n)} ) \\
  +  \PP (\gamma_{S_{m-1}+1}= - i_m \cap A^{(m)} | \gamma_{S_{m}+1}= i_m ) \\ \times \PP (\gamma_{S_{m}+1}= - i_{m+1} \cap A^{(m+1)} \cap  \ldots \cap A^{(n)} ).
\end{multline*}
Note that 
$ \PP (\gamma_{S_{m-1}+1}= - i_m \cap A^{(m)} \cap \gamma_{S_{m}+1}= i_m) =0,$  
since, on this event, the value $- i_m $ of $ \gamma_{S_{m-1}+1}$ does not fit the value of $a_l$'s 
on  $A^{(m)}.$ Hence all the $\alpha_l= 0 $ and $ \gamma_{S_{m}+1} = \gamma_{S_{m-1}+1}.$ It follows that  
\begin{equation}
\label{eq:v_m^n}
 v_m^n = \PP (\gamma_{S_{m-1}+1}=- i_m \cap A^{(m)} | \gamma_{S_{m}+1}= - i_m ) u_{m+1}^n. 
\end{equation}
The Lemma~\ref{lem:rec-matrix} is the consequence of equations (\ref{eq:u_m^n}), (\ref{eq:v_m^n}), (\ref{eq:prob-cond1}), (\ref{eq:prob-cond2}). 
\end{proof}
 
Let us recall~(\ref{eq:frac-cont}) $ x_m^n \DP\frac{v_m^n }{u_m^n }. $ Lemma~\ref{lem:rec-matrix} 
yields for $ m+1 \le n,$ 
\begin{equation}
\label{eq:x_m^n_induc}
 x_m^n= \frac1{M_m + x^n_{m+1}}. 
\end{equation}
Moreover 
\begin{align}
u_n^n &= \PP ( \gamma_{S_{n-1}+1}= i_n, \; a_j= i_n, \; S_{n-1}+1 \le j \le S_n) \\
&= \PP ( \gamma_{S_{n-1}+1}= i_n,  \beta_j = i_n,\; \alpha_j = 0,\;  S_{n-1}+1 \le j \le S_n) \notag \\
&+ \sum_{j= S_{n-1}+1}^{S_n} \PP ( \gamma_{S_{n-1}+1}= i_n, \; \forall j \neq j_0 \beta_j= i_n,\; \alpha_j=0, \; \mbox{and} \; \alpha_{j_0}=1) \notag  \\
&= \frac12 \frac{1}{3^{M_n}}+ \frac12 \frac{M_n}{3^{M_n}}. \label{u_n^n}
\end{align}
Moreover $ v_n^n= \frac12 \frac{1}{3^{M_n}}$ and $x_n^n=\frac1{M_n+1}.$
Hence for $ k\le n $ 
\begin{equation}
\label{eq:frac-cont-x^k_n}
x^n_m = \frac{1}{M_m + \frac{1}{M_{m+1}+\frac{1}{\ldots+ \frac1{M_n+1}}}} = [M_m, M_{m+1},\ldots, M_n,1].
\end{equation}
\begin{remark}
Please note that we can use the induction of Lemma~\ref{lem:rec-matrix} even if $ K=\infty.$ Moreover when $ n\to \infty $ $ x^n_m $ is converging to the irrational
number that we will denote by $ x^\infty_m $
\end{remark}
\section{Initializing phase}
In this part we will compute the  conditional probability given $a$ that there is  a digit $ b_j \neq a_j $ in the first block of constancy of $a$ namely $ A^{(1)}.$  
Hence when $ K $ is finite if $ a $ has $N$ blocks of constancy we are aiming 
for $ \PP (\epsilon_1= 1 | \cap_{m=1}^N  A^{(m)} ).$ When $ K$ is infinite we want to compute the limit of the previous probability when $ N \to \infty.$ 
For $ N \ge n \ge 2,$ we start by computing 
\begin{align*}
\PP (& \epsilon_1= 0 , \; A^{(1)}, \; \gamma_{S_{1}+1}=\pm1, \; \cap_{m=2}^n  A^{(m)}) \\
&=  \PP ( \epsilon_1= 0, \; A^{(1)}, \; \cap_{m=2}^n  A^{(m)} |  \; \gamma_{S_{1}+1}=\pm1)\PP ( \gamma_{S_{1}+1}=\pm1) \\
&= \PP ( \epsilon_1= 0, \; A^{(1)} | \gamma_{S_{1}+1}=\pm1) \\
&\phantom{\PP ( \epsilon_1= 0,)}\times  \PP ( \cap_{m=2}^n  A^{(m)},   \; \gamma_{S_{1}+1}=\pm 1| \gamma_{S_{1}+1}=\pm1) \PP ( \gamma_{S_{1}+1}=\pm1) \\
&=\frac{ \PP ( \epsilon_1= 0, \; A^{(1)},\;  \gamma_{S_{1}+1}=\pm1) \PP ( \cap_{m=2}^n  A^{(m)} ,  \; \gamma_{S_{1}+1}=\pm1) }{\PP ( \gamma_{S_{1}+1}=\pm1)}.
\end{align*} 
Since $ \PP ( \gamma_{S_{1}+1}= \pm 1) =\frac12$ and  
$$ \PP ( \epsilon_1= 0,  \; A^{(1)},\;  \gamma_{S_{1}+1}=\pm1) = \PP ( \epsilon_1= 0, \; A^{(1)},\;  \epsilon=\pm1));$$
\begin{multline*}
 \PP ( \epsilon_1= 0,  \; A^{(1)},\;  \gamma_{S_{1}+1}=\pm1) 
=  \\ \PP (\forall k = 1 \; \mbox{to} \; M_1,  \; \alpha_k=0 \;  S_1 \beta_k = a_1, \; \epsilon=\pm1),
\end{multline*}
$$ \PP ( \epsilon_1= 0,  \; A^{(1)},\;  \gamma_{S_{1}+1}=\pm1) = \frac1{3^{M_1}} \frac12,$$ 
this yields 
\begin{multline*}
 \PP (\epsilon_1= 0 , \; A^{(1)}, \; \gamma_{S_{1}+1}=\pm1, \; \cap_{m=2}^n  A^{(m)}) )= \\ \frac1{3^{M_n}}  \PP ( \cap_{m=2}^n  A^{(m)},   \; \gamma_{S_{1}+1}=\pm1), 
\end{multline*}
 which can be written 
$$  \PP ( \epsilon_1= 0, \; \cap_{m=1}^n  A^{(m)} ) = \frac1{3^{M_1}} (u_2^n+v_2^n). $$ 
Similarly
\begin{align*}
 \PP (& \epsilon_1= 1 , \; A^{(1)}, \; \gamma_{S_{1}+1}=i_2, \; \cap_{m=2}^n  A^{(m)})
  \\
  &= \frac{ \PP ( \epsilon_1= 1,\; A^{(1)},\;  \gamma_{S_{1}+1}=i_2) \PP ( \cap_{m=2}^n  A^{(m)},   \; \gamma_{S_{1}+1}=i_2) }{\PP ( \gamma_{S_{1}+1}=i_2)} \\
  &=  \frac{M_1}{3^{M_1}} u_2^n.
\end{align*}
 Since $ \PP (\gamma_{S_{1}+1}=-i_2, \;  \epsilon_1= 1, \; A^{(1)}) =0,$ by summing the previous probabilities,  we obtain.  
$$ \PP (\cap_{m=1}^n  A^{(m)}) = \frac1{3^{M_1}} ((M_1+1) u_2^n+v_2^n)$$ and 
\begin{equation}
\label{eq:epsilon1}
\PP ( \epsilon_1= 1| \cap_{m=1}^n  A^{(m)})=\frac{M_1}{M_1+1 +x_2^n}. 
\end{equation}
Equation~(\ref{eq:epsilon1}) yields equation~(\ref{eq:eps1}) and  becomes
\begin{equation}
\label{eq:epsilon1_infinite}
\PP ( \epsilon_1= 1| a)=\frac{M_1}{M_1+1 +x_2^\infty}. 
\end{equation}
when $ K $ is infinite by letting $ n \to \infty.$
When $ N = 1, $ $  x_2^N $ is not defined but 
we may compute $ \PP ( \epsilon_1= 1|  A^{(1)} ) $ as follows. 
If $ N=1 $ it means that the vertex $a$ has a single block of constancy. 
Hence $  A^{(1)} = \{a\}, $ and the vertex $a$ has $ M_1 $ neighbors in 
the graph which are different from $ a$ and there are two edges that starts from $a$ 
and ends at $a.$ Hence, when $ N= 1, $ equation~(\ref{eq:epsilon1}) becomes
\begin{equation}
\label{eq:epsilon1_n=1}
\PP ( \epsilon_1= 1| A^{(1)} ) = \frac{M_1}{M_1+ 2},
\end{equation}
which is coherent with the convention $ x_2^1=1.$
 \section{Subsequent phases}
 \subsection{$\PP(\epsilon_n= 1|\epsilon_{n-1}= 0, \ldots, \epsilon_{n-(2k-1)}=1,\;  \cap_{m=1}^{N}  A^{(m)})$}
 Here we consider $ N > n \ge 2.$
 The numerator of the conditional probability is 
 \begin{align*}
  \PP(\epsilon_n &= 1, \; \epsilon_{n-1}= 0, \ldots, \epsilon_{n-(2k-1)}=1,\; \cap_{m=1}^{N}  A^{(m)}) \\ = \PP( &\epsilon_n= 1, \; \epsilon_{n-1}= 0, \ldots, \epsilon_{n-(2k-1)}=1, \; \cap_{m=1}^{N}  A^{(m)},\; \gamma_{S_n +1}= i_{n+1})  \\
 = 2 \PP(& \epsilon_n= 1 , \;\epsilon_{n-1}= 0, \ldots, \epsilon_{n-(2k-1)}=1,\; 
 \cap_{m=1}^n  A^{(m)}) \\ 
  &\phantom{2 \PP( \epsilon_n= 1 , \;\epsilon_{n-1}= 0, \ldots,}   \times \PP(\gamma_{S_n +1}= i_{n+1},\; \cap_{m=n+1}^{N}  A^{(m)})\\
 =  4 \PP(& \epsilon_{n-1}= 0, \ldots, \epsilon_{n-(2k-1)}=1,\; \cap_{m=1}^{n-1}  A^{(m)}, \; \gamma_{S_{n-1}+1}=i_n) \\
  &\phantom{2 \PP( \epsilon_n= 1 , \;\epsilon_{n-1}= 0, \ldots,}  \times \PP(\epsilon_n= 1, \; A^{(n)}) u_{n+1}^{N},  
\end{align*} 
where the last two equalities come from  conditional independence with respect to 
$\gamma$ taken at the convenient index. 
For the denominator of the conditional probability of the title of the section, the same kind of manipulations yield  
\begin{multline*}
\PP(\epsilon_{n-1}= 0, \ldots, \epsilon_{n-(2k-1)}=1, \; \cap_{m=1}^{N}  A^{(m)})= \\
  2 \PP(\epsilon_{n-1}= 0, \ldots, \epsilon_{n-(2k-1)}=1, \; \cap_{m=1}^{n-1}  A^{(m)})
 \PP(\gamma_{S_{n-1} +1}= i_{n},\; \cap_{m=n}^{N}  A^{(m)})  \\
  = 2 \PP(\epsilon_{n-1}= 0, \ldots, \epsilon_{n-(2k-1)}=1 , \; \cap_{m=1}^{n-1}  A^{(m)}) u_n^{N}.
\end{multline*}
Then
\begin{align*}
\PP(\epsilon_n= 1|\epsilon_{n-1}= 0, \ldots, \epsilon_{n-(2k-1)}=1,\;  \cap_{m=1}^{N}  A^{(m)}) &= 2 \PP(\epsilon_n= 1, \; A^{(n)}) \frac{u_{n+1}^{N}}{u_n^{N}},
\end{align*}
\begin{multline*}
\PP(\epsilon_n= 1|\epsilon_{n-1}= 0, \ldots, \epsilon_{n-(2k-1)}=1,\;  \cap_{m=1}^{N}  A^{(m)})= \\ \frac{M_n}{3^{M_n}}\frac{u_{n+1}^{N}}{\frac1{3^{M_n}}(M_n u_{n+1}^{N} + v_{n+1}^{N})} ,
\end{multline*}
\begin{align}
\PP(\epsilon_n= 1|\epsilon_{n-1}= 0, \ldots, \epsilon_{n-(2k-1)}=1,\;  \cap_{m=1}^{N}  A^{(m)}) &=M_n \frac1{M_n + x_{n+1}^{N}} \notag\\
&=M_n x_n^{N}. \label{eq:ep=1-ep=1}
\end{align}
When $K$ is infinite, we can let $ N \to \infty$ to obtain  
\begin{equation}
\label{eq=0_infty}
\PP(\epsilon_n= 1|\epsilon_{n-1}= 0, \ldots, \epsilon_{n-(2k-1)}=1,\;  \cap_{m=1}^{\infty}  A^{(m)}) = M_n x_n^{\infty}. 
\end{equation}

When $ N = n,$ we have to change the computation of the numerator. Actually in this case
\begin{align*}
 \PP(\epsilon_n = 1&, \; \epsilon_{n-1}= 0, \ldots, \epsilon_{n-(2k-1)}=1, \; \cap_{m=1}^{n}  A^{(m)})= \\2 &\PP(\epsilon_{n-1}= 0, \ldots, \epsilon_{n-(2k-1)}=1,\;  \cap_{m=1}^{n-1}  A^{(m)}) \PP(\epsilon_n= 1, \; A^{(n)}). \\
\end{align*} 
Since 
\begin{align*}
\PP(\epsilon_{n-1}= 0&, \ldots, \epsilon_{n-(2k-1)}=1,\;  \cap_{m=1}^{n}  A^{(m)})= \\
  =2 \PP(\epsilon_{n-1}&= 0, \ldots, \epsilon_{n-(2k-1)}=1 \cap_{m=1}^{n-1}  A^{(m)})
 \PP(\gamma_{S_{n-1} +1}= i_{n},\; \cap  A^{(n)})  \\
  =2 \PP(\epsilon_{n-1}&= 0, \ldots, \epsilon_{n-(2k-1)}=1, \; \cap_{m=1}^{n-1}  A^{(m)}) u_n^n, 
\end{align*}
we get 
\begin{align*}
\PP(\epsilon_n= 1|\epsilon_{n-1}= 0, \ldots, \epsilon_{n-(2k-1)}=1,\;  \cap_{m=1}^{n}  A^{(m)}) &= 2 \frac{\PP(\epsilon_n= 1, \; A^{(n)})}{u_n^n} \\
&=  \frac{M_n}{3^{M_n}}\frac1{u_n^n}.
\end{align*}
Because of~(\ref{u_n^n})
\begin{equation}
\label{eq=0_der}
\PP(\epsilon_n= 1|\epsilon_{n-1}= 0, \ldots, \epsilon_{n-(2k-1)}=1,\;  \cap_{m=1}^{n}  A^{(m)}) = \frac{M_n}{M_n+1}.
\end{equation}
Hence equation~(\ref{eq:ep=1-ep=1}) is always verified. 
\subsection{$\PP(\epsilon_n= 1|\epsilon_{n-1}= 0, \ldots, \epsilon_1=0,\;  \cap_{m=1}^{N}  A^{(m)})$}
Let us start with the denominator and assume until further notice that $ N > n \ge 2,$ 
\begin{multline*}
 \PP(\epsilon_{n-1}= 0, \ldots, \epsilon_1=0,\;  \cap_{m=1}^{N}  A^{(m)}) \\ = 
\sum_{\nu= -1}^{+1} \PP(\gamma_1=\nu,\; \epsilon_{n-1}= 0, \ldots, \epsilon_1=0,\;  \cap_{m=1}^{N}  A^{(m)}).
\end{multline*}
Then 
\begin{multline*}
 \PP(\gamma_1=\nu,\; \epsilon_{n-1}= 0, \ldots, \epsilon_1=0,\;  \cap_{m=1}^{N}  A^{(m)}) =\\ 2  \PP(\gamma_1=\nu,\; \epsilon_1=0,\; A^{(1)}) \PP(\epsilon_{n-1}= 0, \ldots, \epsilon_2=0,\; \gamma_{S_1+1}=\nu,\; \cap_{m=2}^{N}  A^{(m)}) \\
  =   \PP(\epsilon_1=0 ,\; A^{(1)}) \PP(\epsilon_{n-1}= 0, \ldots, \epsilon_2=0,\; \gamma_{S_1+1}=\nu,\; \cap_{m=2}^{N}  A^{(m)}) \\
  = \prod_{m=1}^{n-1} \PP(\epsilon_m=0 ,\; A^{(m)}) \PP (\gamma_{S_{n-1}+1}=\nu, \;  \cap_{m=n}^{N}  A^{(m)}).
\end{multline*}
Hence 
\begin{equation}
\label{eq=ep=0}
 \PP(\epsilon_{n-1}= 0, \ldots, \epsilon_1=0,\;  \cap_{m=1}^{N}  A^{(m)})  = 
 \prod_{m=1}^{n-1} \PP(\epsilon_m=0 ,\; A^{(m)}) (u_n^N+v_n^N).
\end{equation}
The previous formula is also true for $ N =n.$
Moreover
\begin{multline*}
\PP(\epsilon_n= 1,\; \epsilon_{n-1}= 0, \ldots, \epsilon_1=0, \cap_{m=1}^{N}  A^{(m)})
 \\=  \PP(\epsilon_1= 0,\; \; \gamma_1 = i_n, \; \epsilon_2= 0,\;   \; \gamma_{S_1+1} = i_n, \; \ldots,  \epsilon_{n-1}= 0,\dots\\ \phantom{\PP(\epsilon_1= 0,\; \; \gamma_1 = i_n)}\ldots, \gamma_{S_{n-1}+1} = i_n \;  \epsilon_{n}= 1, \; \gamma_{S_{n}+1} = i_{n+1} \;  \cap_{m=1}^{N}  A^{(m)}) \\
= \prod_{m=1}^{n-1} \PP(\epsilon_m=0 ,\; A^{(m)}) \PP(\epsilon_{n}= 1, \; \gamma_{S_{n}+1} = i_{n+1} \;  \cap_{m=n}^{N}  A^{(m)}))\\
= 2 \prod_{m=1}^{n-1} \PP(\epsilon_m=0 ,\; A^{(m)}) \PP(\epsilon_{n}= 1, \; A^{(n)}) u_{n+1}^{N},
\end{multline*} 
for  $ N > n.$ Hence 
\begin{align}
\PP(\epsilon_n= 1|\epsilon_{n-1}= 0, \ldots, \epsilon_1=0,\;  \cap_{m=1}^{N}  A^{(m)}) &= \PP(\epsilon_{n}= 1, \; A^{(n)}) \frac{2 u_{n+1}^{N}}{u_{n}^{N}+ v_{n}^{N}} \notag \\
&= \frac12 \frac{M_n}{3^{M_n}} \frac{ 2 \times 3^{M_n}  u_{n+1}^{N} }{(M_n +1) u_{n+1}^{N} +  v_{n+1}^{N}} \notag \\
&= \frac{M_n}{M_n + 1 + x_{n+1}^{N} } \label{eq:ep=1-ep=0}
\end{align}
Where we have used 
$$ \PP(\epsilon_{n}= 1, \; A^{(n)}) = \frac12 \frac{M_n}{3^{M_n}}  $$ 
 and Lemma~\ref{lem:rec-matrix}  in the previous computations. 
When $ K = \infty$ we let $  N \to \infty$  in~(\ref{eq:ep=1-ep=0})
and we get 
 \begin{equation}
 \label{eq=ep=0, infty}
 \PP(\epsilon_n= 1|\epsilon_{n-1}= 0, \ldots, \epsilon_1=0,\;  \cap_{m=1}^{\infty}  A^{(m)}) = \frac{M_n}{M_n + 1 + x_{n+1}^{\infty} }.
\end{equation}
When $ N=n,$
\begin{multline*}
 \PP(\epsilon_n= 1,\; \epsilon_{n-1}= 0, \ldots, \epsilon_1=0, \cap_{m=1}^{N}  A^{(m)}) \\ =  \prod_{m=1}^{n-1} \PP(\epsilon_m=0 ,\; A^{(m)}) \PP(\epsilon_{n}= 1, \; A^{(n)}).
\end{multline*}
 \begin{align}
 \PP(\epsilon_n= 1|\epsilon_{n-1}= 0, \ldots, \epsilon_1=0,\;  \cap_{m=1}^{n}  A^{(m)}) &= \frac{\PP(\epsilon_{n}= 1, \; A^{(n)})}{u_n^n + v_n^n}\\
 &= \frac12 \frac{M_n}{3^{M_n}}  \frac1{u_n^n + v_n^n} \\
 &= \frac{M_n}{M_n + 2}. \label{eq=ep=0,l=0}
 \end{align}
Equations~(\ref{eq:ep=1-ep=0}) and~(\ref{eq=ep=0,l=0}) yield~(\ref{eq:sub-1}) in view of  the convention $x_{n+1}^n=1.$ 
 The equation~(\ref{eq:loop}) is a consequence of the definition of  $\epsilon_m.$
 
 To conclude the proof let us prove that $ \forall m \ge 1 $ the distribution of the $ E_m$'s is uniform on $ \{ 1,\ldots,M_m\} $  conditionally to the event $\epsilon_m = 1.$
 Let us fix $ m_0 $ such that $ 2 \le m_0 \le N.$
 We have to show that $ \forall j_0 \in \{ 1,\ldots,M_{m_0} \} $ 
 $$ \PP (E_{m_0}=j_0 | a, \; \epsilon_{m_0}) $$ 
 does not depend on $ j_0.$ 
 We have
 \begin{multline*}
\PP(E_{m_0}=j_0 , \; \cap_{m=1}^N A^{(m)}, \;  \epsilon_{m_0}=1)= \\ \PP ( E_{m_0}=j_0 
, \;  \epsilon_{m_0}=1, \; \gamma_{S_{m_0-1}+1}= i_{m_0}, 
\; \gamma_{S_{m_0}+1}= i_{m_0 + 1},\; \cap_{m=1}^N A^{(m)}) 
\end{multline*}
\begin{multline*}
= 2 \PP ( \cap_{m=1}^{m_0 -1} A^{(m)}, \; \gamma_{S_{m_0-1}+1}= i_{m_0})  \\
\times  \PP (  E_{m_0}=j_0 ,   \epsilon_{m_0}=1,   \gamma_{S_{m_0-1}+1}= i_{m_0},  \gamma_{S_{m_0}+1}= i_{m_0 + 1}, \cap_{m=m_0 +1}^{N} A^{(m)}) 
\end{multline*}
\begin{multline*}
= 4 \PP ( \cap_{m=1}^{m_0 -1} A^{(m)}, \; \gamma_{S_{m_0 -1}+1}= i_{m_0}) \\
\times \PP (  E_{m_0}=j_0 , \;  \epsilon_{m_0}=1, \;A^{(m_0)})  \PP ( \gamma_{S_{m_0-1}+1}= i_{m_0 + 1}, \;  \cap_{m=m_0 +1}^{N} A^{(m)}) 
\end{multline*}
thanks to the conditional independence when $\gamma$ is given. 
Furthermore $$ \PP(E_{m_0}=j_0 , \; A^{(m_0)}, \;  \epsilon_{m_0}=1) = \PP (\alpha_{j_0}=1,\; \forall i \neq j_0,\; \alpha_i = 0, \; A^{(m_0)}) $$
which does not depend on $ j_0 $ and consequently  $ \PP(E_{m_0}=j_0 , \; A^{(m_0)}, \;  \epsilon_{m_0}=1) $ does not depend on $ j_0.$
It is also the case for $\PP (E_{m_0}=j_0 , \; \cap_{m=1}^N A^{(m)}, \;  \epsilon_{m_0}=1 | a, \; \epsilon_{m_0}=1) $ since $ \PP ( \cap_{m=1}^N A^{(m)}, \;  \epsilon_{m_0}=1) $ does not depend on $ j_0.$ The proof is easier when $ m_0=1.$

\subsection*{Acknowledgment}
Th authors would like to thank James Norris for fruitful discussions concerning a previous version of the article. 




\bibliographystyle{plain}
\bibliography{biblio6,ref3}

\end{document}